\documentclass[12pt,a4paper]{amsart}
\usepackage{amsmath,amssymb,amsthm,graphicx, color}

\setlength{\textwidth}{\paperwidth}
\addtolength{\textwidth}{-2in}

\setlength{\textheight}{\paperheight}
\addtolength{\textheight}{-2.5in}

\calclayout

\newtheorem{proposition}{Proposition}
\newtheorem{conjecture}{Conjecture}
\newtheorem{theorem}{Theorem}

\theoremstyle{definition}

\newtheorem{example}{Example}
\newtheorem{remark}{Remark}

\DeclareMathOperator*{\argmin}{arg\,min}

\newcommand{\co}{\mathrm{co\,}}

\newcommand{\HH}{\mathcal{H}}
\newcommand{\N}{\mathbb{N}}
\newcommand{\R}{\mathbb{R}}

\title[A counterexample to De Pierro's conjecture]{A counterexample to De Pierro's conjecture on the convergence of under-relaxed cyclic projections}

\author{Roberto Cominetti}
\address[Roberto Cominetti]{Universidad Adolfo Ib\'a\~nez, Chile.}
\author{Vera Roshchina}
\address[Vera Roshchina]{University of New South Wales, RMIT University and Federation University Australia}
\author{Andrew Williamson}
\address[Andrew Williamson]{RMIT University, Australia}
\thanks{
Roberto Cominetti gratefully acknowledges partial support from FONDECYT 1171501 and N\'ucleo Milenio ICM/FIC RC130003 ``\emph{Informaci\'on y Coordinaci\'on en Redes}''.
}
\thanks{Vera Roshchina's research was supported by the Australian Research Council grant DE150100240 and by Enabling Capability Platform Information \& Systems (Engineering) of RMIT University.}

\begin{document}
\maketitle

\begin{abstract} The  convex feasibility problem consists in finding a point in the intersection of a finite family of  closed convex sets. 
When the intersection is empty, a best compromise is to search for a point that minimizes the sum of the squared 
distances to the sets. In 2001, de Pierro conjectured that the limit cycles generated by the 
$\varepsilon$-under-relaxed cyclic projection method  converge when $\varepsilon\downarrow 0$ towards a least squares solution.
While the conjecture has been confirmed under fairly general conditions, we show that it is false 
in general by constructing a system of three compact convex sets in $\R^3$ for which the  $\varepsilon$-under-relaxed cycles do not converge.

\medskip

\paragraph{Keywords:} Cyclic projections, under-relaxed projections, De Pierro conjecture. 

\medskip

\paragraph{2010 Mathematics Subject Classification: } 	
52A15  
90C25,  
65K10.  

\end{abstract}

\section{Introduction}

The  convex feasibility problem consists in finding a point in the intersection of finitely many nonempty closed convex sets $C_1,\ldots,C_m$ in a Hilbert space $\HH$. A solution can be approximated by the method of cyclic projections which loops through this finite list of sets by iteratively projecting the current iterate onto the next set in a cyclic manner. Under mild conditions ---for instance, if one of the sets is bounded--- this process converges weakly either to a feasible point in the intersection of the sets, or to a limit cycle if this intersection is empty (see \cite{Gubin}). In the case of two sets $C_1,C_2\subseteq \HH$ the iteration reduces to von Neumann's alternating projection method ({\em cf.} \cite{vonNeuman}) which converges to a two-point cycle that solves the minimal distance problem
\begin{equation}\label{eq:varchartwo}
\min_{x_1\in C_1 \atop x_2\in C_2} \|x_1-x_2\|,\notag
\end{equation}
provided that the latter  has a solution. Such variational characterisation does not exist for three or more sets. It was shown in \cite{RobertoVarChar} that for $m\geq 3$ there is no function $\Phi:\HH^m\to \R$ such that for any collection of compact convex sets $C_1,C_2,\dots, C_m\subseteq \HH$ the limit cycles are precisely the solutions of the minimisation problem 
$$
\min_{x_i\in C_i} \Phi(x_1,x_2,\dots, x_m).
$$

This lack of variational characterization can be countered by considering an under-relaxed version of the cyclic projection method. It was conjectured in \cite{DePierro} that the corresponding limit cycles converge towards a solution of the least squares problem
\begin{equation}\label{eq:ls}
S=\argmin_{u\in \HH} \sum_{i=1}^md(u,C_i)^2.
\end{equation}
Here $d(u,C)=\min_{x\in C}\|x-u\|$ denotes the distance from $u\in\HH$ to the closed convex set $C$.  
We recall that the minimum in $d(u,C)$ is attained at a unique point $\Pi_{C}(u)$ which is the projection
of $u$ onto $C$, and which is characterised as follows: 
a point $w\in C$ is the projection of $u\in \HH$ onto $C$ if and only if\footnote{If $C = \co A$ is the convex hull of a set $A$ it suffices to check \eqref{eq:projchar} for $v\in A$.}
\begin{equation}\label{eq:projchar}
\langle v-w,u-w\rangle \leq 0 \qquad \forall v\in C.
\end{equation}

The under-relaxed cyclic projection method fixes a relaxation parameter $\varepsilon\in (0,1]$ and on each 
iteration an $\varepsilon$-step is taken towards the next projection, namely, given an initial point $u_0\in \HH$ for all $k\in \N\cup \{0\}$ we iterate as
\begin{equation}\label{alt-pi-epsilon}
\left\{\begin{array}{ccl}
u_{km+1} & = &u_{km}+\varepsilon(\Pi_{C_1}(u_{km}) - u_{km})\\  
u_{km+2} & = &u_{km+1}+\varepsilon(\Pi_{C_2}(u_{km+1}) - u_{km+1}),\\  
&\vdots & \\
u_{km+m} & =& u_{km+m-1}+\varepsilon(\Pi_{C_m}(u_{km+m-1}) - u_{km+m-1}).
\end{array}\right.
\end{equation}
Note that the standard cyclic projection method corresponds to the choice $\varepsilon=1$.

Let $\mathbf{u}_k^\varepsilon=(u_{km+1},u_{km+2}, \dots, u_{km+m})\in\HH^m$ be the  $m$-tuple generated on the $k$-th loop of the 
under-relaxed iteration \eqref{alt-pi-epsilon}. Under mild conditions these $m$-tuples converge weakly when $k\to\infty$ to an $\varepsilon$-cycle $\mathbf{u^\varepsilon} = (u_1^\varepsilon,u_2^\varepsilon,\dots,u_m^\varepsilon)\in \HH^m$ such that  
\begin{equation}\label{eq:definecycle}
\left\{\begin{array}{ccl}
u_1^\varepsilon & =& u_m^\varepsilon+ \varepsilon(\Pi_{C_1}(u_m^\varepsilon)-u_m^\varepsilon),\\  
u_2^\varepsilon & = &u_1^\varepsilon+ \varepsilon(\Pi_{C_2}(u_1^\varepsilon)-u_1^\varepsilon),\\  
&\vdots  &\\
u_m^\varepsilon & =& u_{m-1}^\varepsilon+ \varepsilon(\Pi_{C_m}(u_{m-1}^\varepsilon)-u_{m-1}^\varepsilon).
\end{array}\right.
\end{equation}
As a matter of fact, \cite[Propositions~1.1 and~1.3 and Corollary~1.3]{BruckReich} 
show that for any starting point $u_0$ the tuple $\mathbf{u}_k^\varepsilon$ 
converges weakly to an $\varepsilon$-cycle $\mathbf{u}^\varepsilon$  if and only if  the set of solutions to \eqref{eq:definecycle} is nonempty. 
Note that the solution to  \eqref{eq:definecycle} may not be unique, so that in general the limit cycle $\mathbf{u}^\varepsilon$ might depend on the initial point $u_0$. To illustrate this observation and to give intuition for the subsequent discussion, we consider a simple example.

\begin{example}\label{example1} Consider the following system of three sets: two line segments
$$
C_1  := \co \{(-2,2,1), (-2,2,-1)\}, \qquad C_2  := \co \{(2,2,1), (2,2,-1)\},
$$
and the cylinder
$$
C_3 := \{(x,y,z)\,|\, x^2+y^2\leq 1, |z|\leq 1 \}.
$$
The least square solution set is given by the vertical segment 
$$
S=\left\{\left(0,\frac{5}{3},z\right):|z|\leq 1\right\}.
$$
If we start from a point $u_0=(x_0,y_0,z_0)$ at height $z_0$, all the iterates remain in the plane $z=z_0$ and so does the limit 
cycle $\mathbf{u}^\varepsilon$ which therefore depends on $z_0$ (see Fig.~\ref{fig:heights}).
\begin{figure}[ht]
\includegraphics[width=0.45\textwidth]{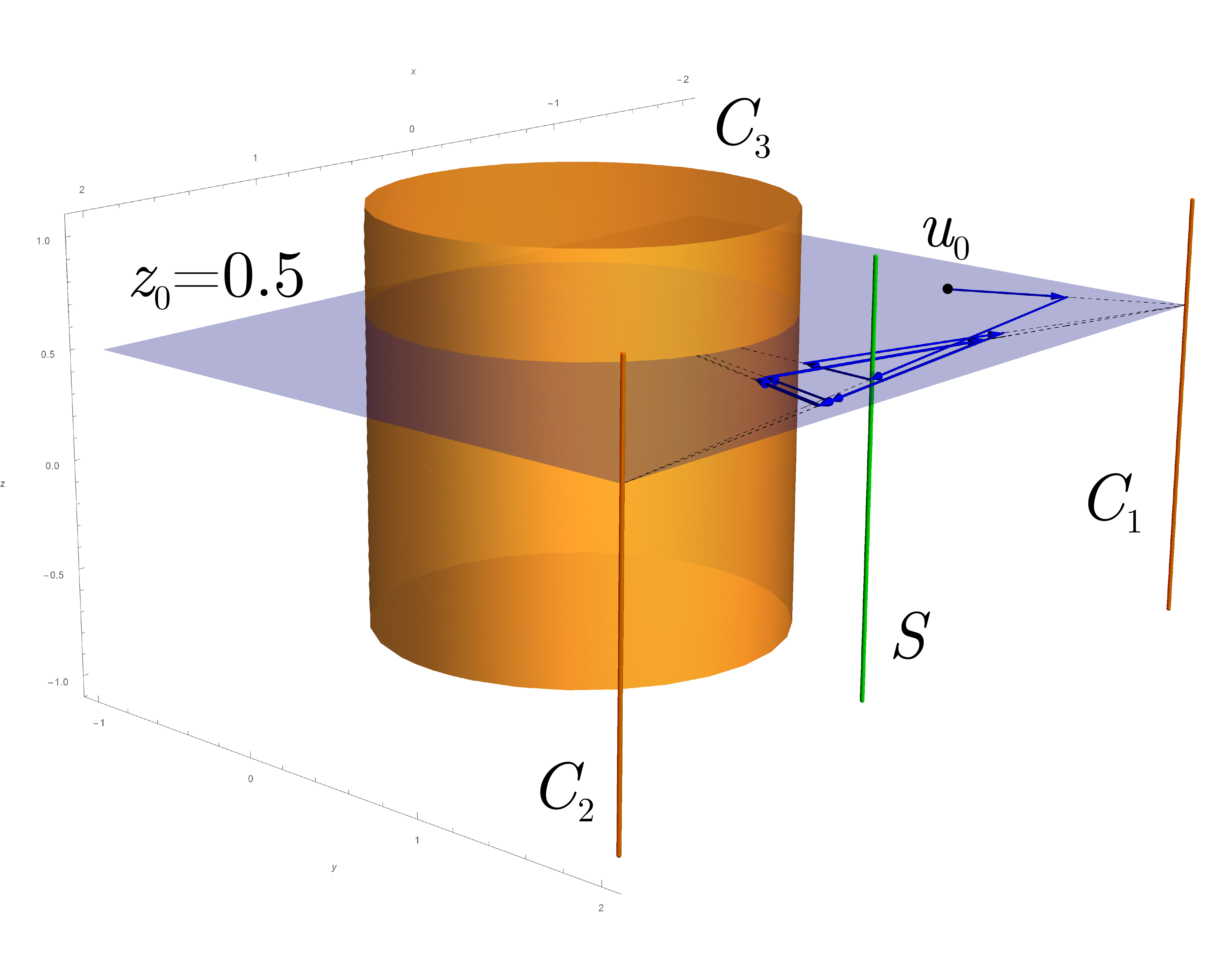}
\includegraphics[width=0.45\textwidth]{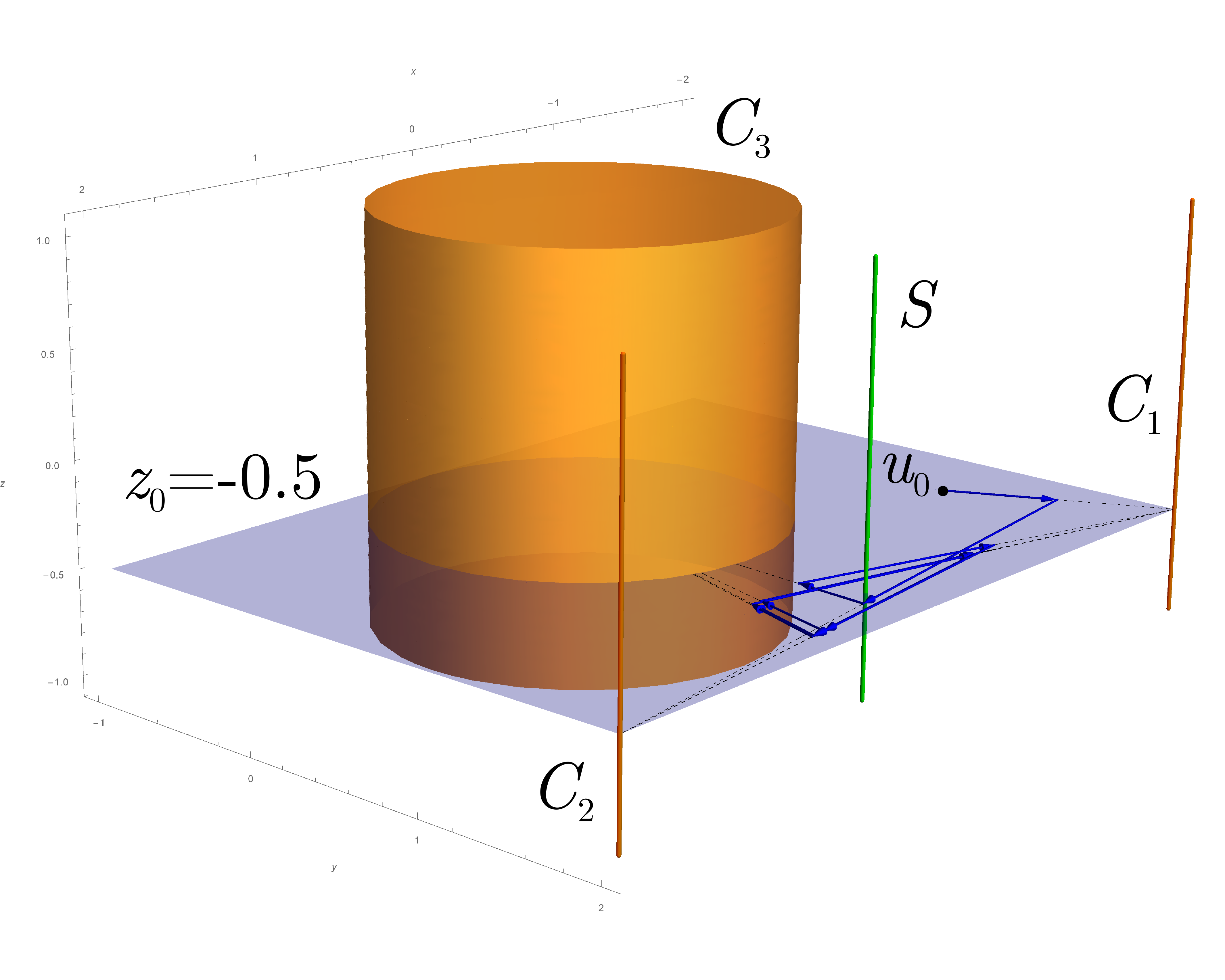}
\caption{The under-relaxed projections for $\varepsilon = \frac{1}{2}$ and different starting points in Example~\ref{example1}.}	
\label{fig:heights}
\end{figure}
Now, if we consider a fixed $u_0$ and we let $\varepsilon\downarrow 0$ the limit cycle shrinks 
towards the point in the least squares segment $S$ at height $z_0$.
Thus, the initial point  $u_0$ serves as an `anchor' that provides some 
hope for the limit cycles $\mathbf{u}^\varepsilon$ to converge as $\varepsilon\downarrow 0$. 

\begin{figure}[ht]
	\includegraphics[width=0.45\textwidth]{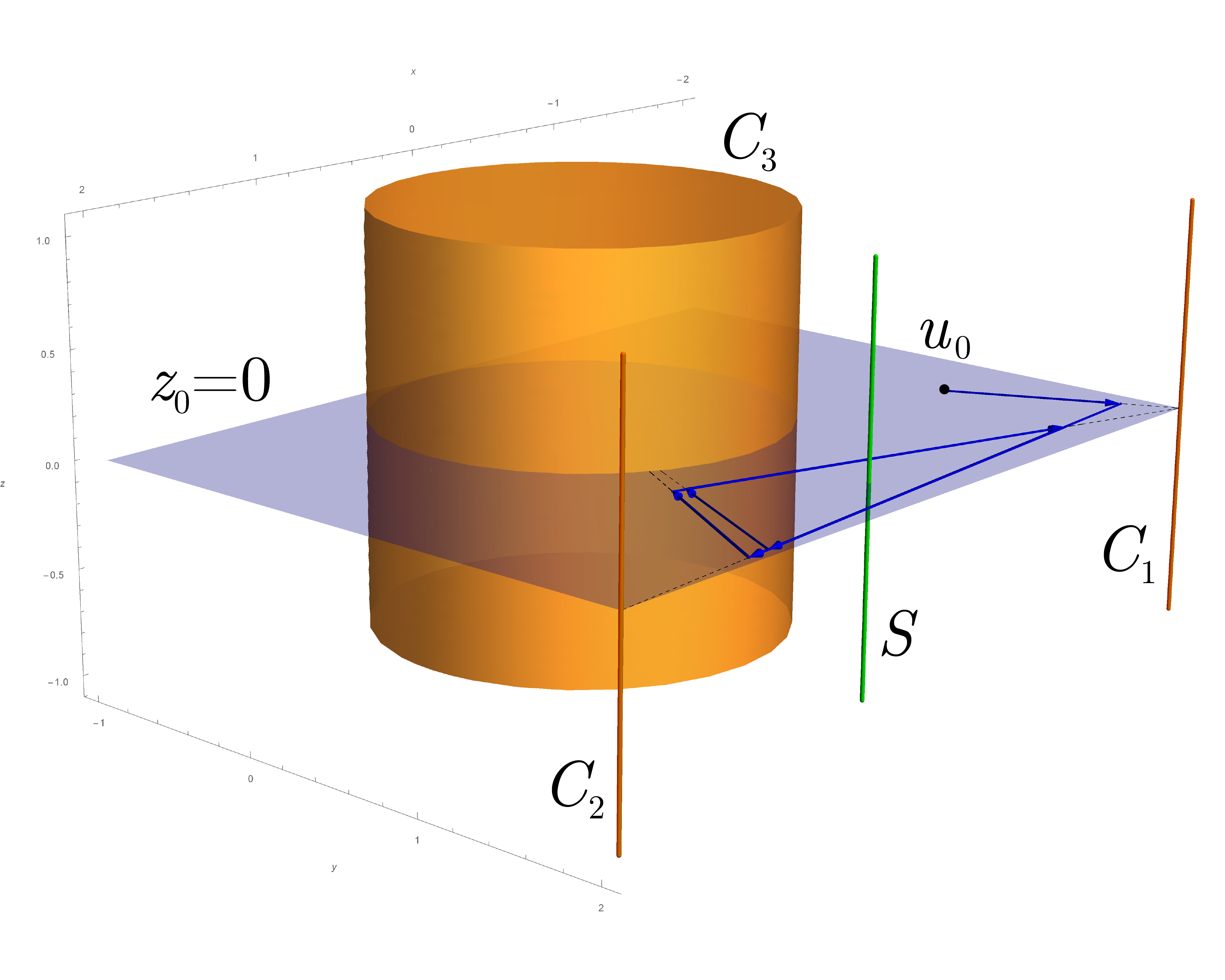}
	\includegraphics[width=0.45\textwidth]{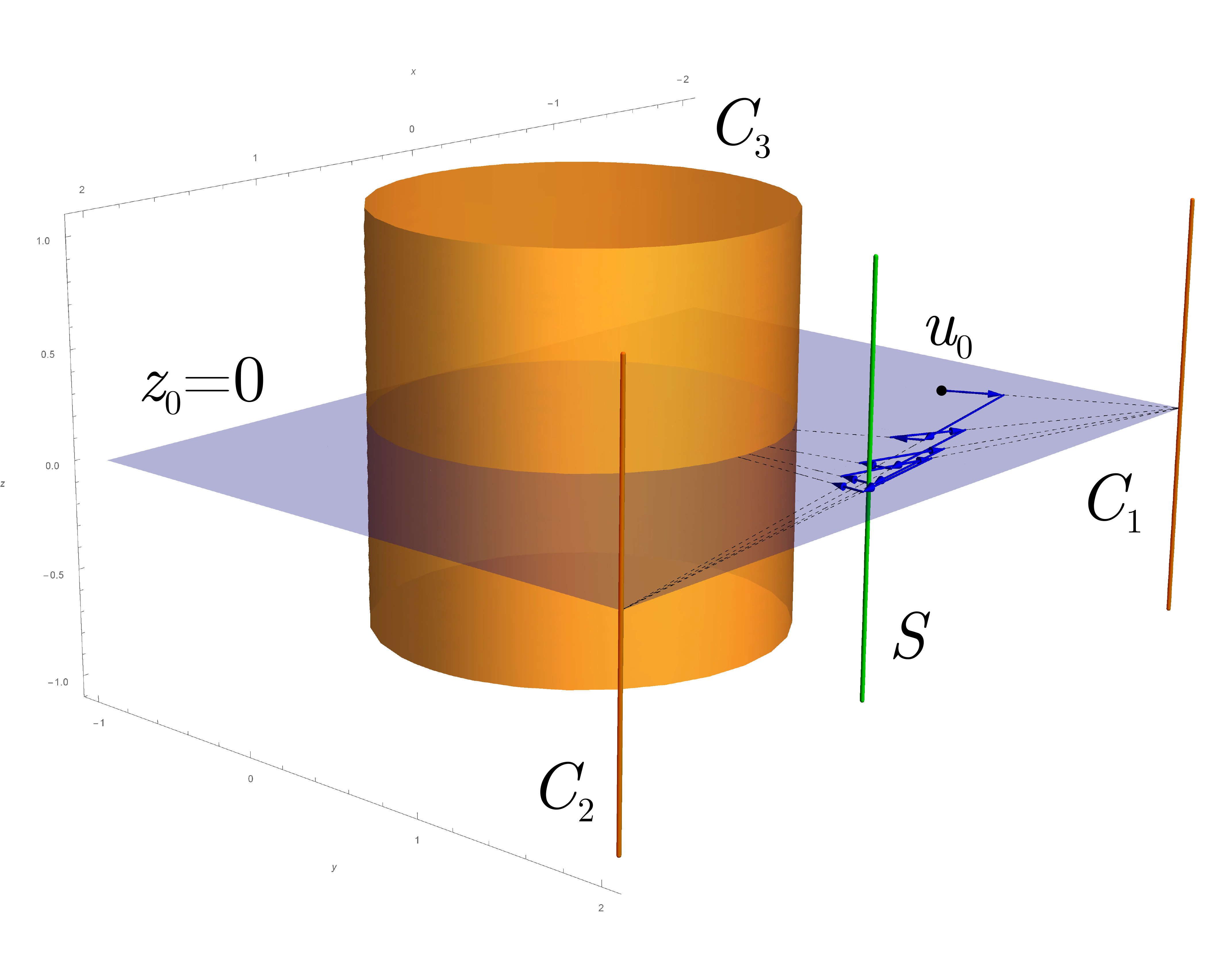}
	\caption{The iterative process for $\varepsilon = \frac{3}{4}$ and $\varepsilon = \frac{1}{4}$ and the same starting point in Example~\ref{example1}.}	
\label{fig:epsilons}
\end{figure}
\end{example}

Following \cite{DePierro}, we consider a fixed starting point $u_0$ and we focus on the existence of the limit as $\varepsilon\to 0$ for the 
corresponding limit cycle $\mathbf{u^\varepsilon}$, that is
\begin{equation}\label{eq:limitA}
\mathbf{u}=\lim_{\varepsilon \downarrow 0} \mathbf{u^\varepsilon}.
\end{equation}
Note that by letting $\varepsilon\to 0$ in \eqref{eq:definecycle} it readily follows that if the limit exists it must be of the form $\mathbf{u}=(\bar u,\bar u,\ldots,\bar u)$ for some $\bar u\in\HH$ (which again may depend on $u_0$).

Together with the limit cycles  $\mathbf{u^\varepsilon}$, the following iterative process was considered in \cite{DePierro}. 
Given a fixed sequence $\lambda_k\to 0$ with $\sum_k \lambda_k = +\infty$ define
$\mathbf{v}_k=(v_{km+1},v_{km+2},\ldots, v_{km+m})$ inductively by setting for $i=1,\dots, m$
$$
v_{km+i} = v_{km+i-1} + \lambda_{km+i} \left(\Pi_{C_m}(v_{km+i-1}) - v_{km+i-1}\right).
$$
In this case we are  interested in the existence of the limit
\begin{equation}\label{eq:limitB}
\mathbf{v} = \lim_{k\to \infty} \mathbf{v}_k
\end{equation}
which again is necessarily of the form $\mathbf{v}=(\bar v,\bar v,\ldots,\bar v)$ for some $\bar v\in\HH$.

In \cite{DePierro} it was conjectured that the convergence of the $\varepsilon$-cycles \eqref{eq:limitA} and of the modified process \eqref{eq:limitB} are  tied to the existence of solutions
to the least-squares problem \eqref{eq:ls}, namely 
\begin{conjecture}[\mbox{\cite[de Pierro]{DePierro}}] The least squares solution set $S$ is nonempty  if and only if for any starting point $u_0$ the limits \eqref{eq:limitA} and \eqref{eq:limitB} exist with both $\bar u$ and $\bar v$ in $S$.
\end{conjecture}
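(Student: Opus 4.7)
The plan is to split the biconditional into its two directions, starting with the easier implication ``existence of limits implies $S\neq\emptyset$''. For the $\varepsilon$-cycle, summing the defining equations \eqref{eq:definecycle} around the loop (with the cyclic convention $u_0^\varepsilon:=u_m^\varepsilon$) telescopes the left-hand side to zero and yields $\sum_{i=1}^m(\Pi_{C_i}(u_{i-1}^\varepsilon)-u_{i-1}^\varepsilon)=0$. Passing to the limit $\varepsilon\downarrow 0$ under the hypothesis $u_i^\varepsilon\to\bar u$ then delivers $\sum_i\Pi_{C_i}(\bar u)=m\bar u$, which is precisely the first-order optimality condition for \eqref{eq:ls}, since the gradient of $\tfrac{1}{2}d(\cdot,C_i)^2$ at $u$ equals $u-\Pi_{C_i}(u)$. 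Hence $\bar u\in S$, and an analogous telescoping argument over a full ``macro cycle'' for the diminishing-step process yields $\bar v\in S$ as well.

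For the harder direction, assume $S\neq\emptyset$ and fix any $s\in S$. First I would establish existence of the cycle $\mathbf{u}^\varepsilon$ for each $\varepsilon\in(0,1]$ via the nonexpansiveness tools of \cite{BruckReich}, coupled with a uniform-in-$\varepsilon$ bound on $\|u_i^\varepsilon-s\|$. The latter should follow from Fej\'er-monotonicity-type estimates on each under-relaxed step $u\mapsto u+\varepsilon(\Pi_{C_i}(u)-u)$ measured against $s$, summed around the cycle and combined with the cyclic identity $\sum_i(\Pi_{C_i}(s)-s)=0$ together with the projection characterisation \eqref{eq:projchar}. By compactness, every sequence $\varepsilon_k\downarrow 0$ then admits a cluster point of $\mathbf{u}^{\varepsilon_k}$, which must take the form $(\bar u,\ldots,\bar u)$ with $\bar u\in S$, since the increments $u_i^\varepsilon-u_{i-1}^\varepsilon$ are $O(\varepsilon)$ and the previous paragraph applies.

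The main obstacle is to prove \emph{uniqueness} of this cluster point and to describe how $\bar u$ depends on $u_0$. Example~\ref{example1} suggests that $u_0$ acts as an anchor selecting a specific point of $S$, so the natural strategy is to expand $\mathbf{u}^\varepsilon=(\bar u,\ldots,\bar u)+\varepsilon\,\mathbf{w}+o(\varepsilon)$, derive a linearised flow on (or near) $S$, and exhibit a Lyapunov function whose strict decrease along this flow selects a unique equilibrium determined by $u_0$. This is exactly where I expect the argument to collapse: if the linearised dynamics on $S$ admits periodic or recurrent orbits rather than an attracting equilibrium, then distinct scales of $\varepsilon$ will pick out distinct points of such an orbit and the limit \eqref{eq:limitA} cannot exist. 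Given that the paper announces a counterexample with three compact convex sets in $\R^3$, I anticipate that the authors engineer precisely such a rotational drift along a non-trivial portion of $S$, invalidating the uniqueness step and thereby refuting the conjecture.
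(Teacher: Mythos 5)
The statement you are asked about is a conjecture that this paper \emph{refutes}: the paper's ``proof'' of it is Theorem~\ref{thm:counterex}, which exhibits three compact convex sets in $\R^3$ for which $S\neq\emptyset$ (automatic by compactness) yet the limit \eqref{eq:limitA} does not exist. So no proof of the biconditional can succeed. Your first two paragraphs --- the telescoped identity $\sum_i(\Pi_{C_i}(\bar u)-\bar u)=0$ showing that any limit must lie in $S$, and the Fej\'er-type bounds giving cluster points of $\mathbf{u}^{\varepsilon_k}$ in $S$ --- are essentially sound, but they only establish that \emph{if} the limit exists it lies in $S$ and that cluster points always lie in $S$. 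To your credit, you correctly locate the fault line at uniqueness of the cluster point and correctly suspect the conjecture is false. But a refutation requires an explicit construction, and your proposal stops at a heuristic (``I anticipate that the authors engineer \ldots'') without producing one; as written it proves neither direction and does not disprove the statement either.

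Moreover, the mechanism you guess at is not the one the paper uses, and your ``$u_0$ acts as an anchor'' picture is explicitly defeated by the construction. The counterexample takes $C_1,C_2$ to be two vertical segments and $C_3=\overline{\co}\{p_k\}$ with $p_k=(\cos t_k,\sin t_k,(-1)^k)$ and $t_k\uparrow\tfrac{\pi}{2}$: an infinite fan of facets with alternating slopes accumulating at $v_\infty=(0,1)$. For each $\varepsilon\in(0,1)$ the $\varepsilon$-cycle is \emph{unique} (Proposition~\ref{prop:uniquecycle}), hence independent of $u_0$, so there is no anchoring and no linearised flow on $S$ to analyse. Instead, after reducing to the $xy$-plane (Proposition~\ref{prop:DimRed}), the support point $w_3'=\Pi_{C_3'}(u_2')$ is put in correspondence with $\varepsilon$ via the decreasing map $c\mapsto\varepsilon(c)$ of Proposition~\ref{prop:bijection}; as $\varepsilon\downarrow 0$ the lifted support $w_3=(c,z(c))$ is forced to march along the zig-zag path through the $p_k$, whose $z$-coordinate alternates between $-1$ and $+1$, so the unique cycles $\mathbf{u}^\varepsilon$ oscillate between heights near $-1$ and $+1$ and \eqref{eq:limitA} fails (their cluster set being the whole segment $S$). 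To complete your argument you would have to replace your final heuristic paragraph with such a construction.
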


This conjecture has been confirmed under various conditions. In \cite{censor} it was proved for families of affine subspaces of $\R^n$, a result which was extended in \cite{BauschkeTranslates} to the infinite dimensional setting under a metric regularity condition. Beyond the case of affine subspaces, \cite[Theorem 2.8]{RobertoUnderRelaxed} describes several geometric conditions under which Conjecture 1 is true. The approach in \cite{RobertoUnderRelaxed} established a connection between the asymptotics of the  $\varepsilon$-cycles 
$\mathbf{u}^\varepsilon$ and the steepest descent trajectory $\dot u(t)=-\nabla\Phi(u(t))$
where $\Phi(x)=\frac{1}{2m}\sum_{i=1}^m d(x,C_i)^2$ is the least squares objective (up to the constant factor $\frac{1}{2m}$).
By exploiting this connection, \cite[Theorem 3.3]{RobertoUnderRelaxed} proves Conjecture 1 under a mild  geometrical condition.
In particular this condition holds automatically for the case of alternating projections where only $m=2$ sets are involved  \cite[Corollary 3.4]{RobertoUnderRelaxed}. 

All these known results require some additional condition so that it remains as an open question whether  the conjecture holds in full generality 
as stated by de Pierro. Our goal is to disprove the conjecture by constructing a system of three compact convex sets in $\R^3$ for which the limit \eqref{eq:limitA} 
does not exist. Our main result is as follows.

\begin{theorem}\label{thm:counterex} There exist compact convex sets $C_1$, $C_2$, $C_3$ in $\R^3$ such that for all $\varepsilon\in (0,1]$  
there is a unique $\varepsilon$-cycle $\mathbf{u}^\varepsilon$ satisfying \eqref{eq:definecycle}. Moreover $\mathbf{u}^\varepsilon$
diverges for $\varepsilon\to 0$ so that the limit \eqref{eq:limitA} does not exist.
\end{theorem}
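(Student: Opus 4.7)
The plan is to construct the counterexample by starting from Example~\ref{example1} and breaking the residual symmetry that forces the $\varepsilon$-cycles to converge there. In that example, the cylinder $C_3$ is invariant under rotation about the $z$-axis and the two segments $C_1,C_2$ lie in a common vertical plane, so every loop of the relaxed iteration preserves the $z$-coordinate, and the cycle converges as $\varepsilon\downarrow 0$ to the unique point of $S$ at height $z_0$. I would keep $C_3$ (or another strictly convex cylinder or ball) and replace the two segments by compact convex sets placed in general position relative to the cylinder axis, so that a full loop of the relaxed iteration induces a genuine rotation about that axis. The aim is that, as $\varepsilon\downarrow 0$, the azimuthal coordinate of the unique $\varepsilon$-cycle winds around $[0,2\pi)$ infinitely often, preventing any limit from existing in $\R^3$.

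For the \emph{uniqueness} part, for each $\varepsilon\in(0,1]$ the composition $T_\varepsilon=T_3^\varepsilon\circ T_2^\varepsilon\circ T_1^\varepsilon$, where $T_i^\varepsilon(x)=(1-\varepsilon)x+\varepsilon\Pi_{C_i}(x)$, is nonexpansive and $\varepsilon$-averaged for $\varepsilon<1$; compactness of the sets together with \cite{BruckReich} already yields existence of a cycle for every starting point. Two fixed points $\mathbf{u}^\varepsilon\neq\tilde{\mathbf{u}}^\varepsilon$ would force $\|u_i^\varepsilon-\tilde u_i^\varepsilon\|$ to be constant in $i$, and the strict convexity of $C_3$ combined with the off-axis position of $C_1,C_2$ should preclude this via the characterisation \eqref{eq:projchar}. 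The boundary case $\varepsilon=1$ will be handled by direct inspection of the standard cyclic three-projection map.

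For the \emph{divergence} part, I would introduce cylindrical coordinates $(r,\theta,z)$ adapted to $C_3$ and rewrite the cycle equations \eqref{eq:definecycle} in these coordinates. Using the rotational symmetry of $C_3$, the radial and vertical parts of $(u_1^\varepsilon,\ldots,u_m^\varepsilon)$ should satisfy, to leading order, an equation admitting a unique solution that converges as $\varepsilon\downarrow 0$ to a definite circle on $C_3$; the angular part should satisfy an equation whose unique solution $\theta(\varepsilon)$ contains a term of order $\alpha/\varepsilon\pmod{2\pi}$ (or a similar expression whose image is dense in $[0,2\pi)$) as $\varepsilon\downarrow 0$. This prevents $u_1^\varepsilon$ from having a limit in $\R^3$ and so disproves \eqref{eq:limitA}.

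The main obstacle is the joint design of $C_1,C_2$ and the associated angular analysis. The per-loop angular drift produced by the three relaxed projections must be of precisely the right order in $\varepsilon$ for the winding of $\theta(\varepsilon)$ to survive in the limit, rather than be absorbed into a convergent higher-order correction; the uniqueness argument must hold \emph{globally} in $\varepsilon\in(0,1]$ and not merely asymptotically, which is delicate for $\varepsilon$ near $1$; and the three sets must remain genuinely compact and convex throughout. Packaging these competing demands into a tractable explicit configuration, for which the fixed-point equation can be solved to the accuracy needed to read off the divergent winding, is the core technical difficulty.
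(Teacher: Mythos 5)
Your proposal is a design brief rather than a proof: the sets $C_1,C_2$ are never specified, the central claim that the angular coordinate of the unique cycle behaves like $\alpha/\varepsilon \pmod{2\pi}$ is asserted but not derived from any fixed-point equation, and the uniqueness argument is only gestured at. Every genuinely hard part of the theorem --- exhibiting explicit compact convex sets, proving uniqueness of the cycle for \emph{all} $\varepsilon\in(0,1]$, and quantifying the divergence --- is listed by you under ``main obstacles'' and deferred. Beyond incompleteness, the mechanism you propose is doubtful on its face: with a rotationally symmetric, strictly convex or smooth $C_3$ and two compact convex sets in general position, the least-squares objective and the associated steepest-descent flow are well behaved, and such configurations tend to fall under the known sufficient conditions of \cite{RobertoUnderRelaxed} under which de Pierro's conjecture is \emph{true}. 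You give no reason why the fixed point of the averaged map $T_\varepsilon$ should acquire an unbounded winding as $\varepsilon\downarrow 0$ rather than converge along the gradient trajectory; producing a per-loop azimuthal drift of exactly the right order is not a packaging issue but the entire mathematical content, and there is no evidence it can be realized with smooth symmetric data.

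The paper's construction works by an entirely different, and in a sense opposite, mechanism. It keeps $C_1$ and $C_2$ as the two vertical segments of Example~\ref{example1}, which forces every $\varepsilon$-cycle and its support to lie in a single plane orthogonal to the $z$-axis (Proposition~\ref{prop:DimRed}) --- so there is no azimuthal dynamics at all and the problem reduces to two dimensions. The cylinder is replaced by $C_3=\overline{\co}\{p_k\}$ with $p_k=(\cos t_k,\sin t_k,(-1)^k)$ and $t_k\uparrow\frac{\pi}{2}$: a non-smooth set with infinitely many facets accumulating at $v_\infty$, whose extreme points alternate between heights $+1$ and $-1$. Uniqueness follows from a short contraction estimate exploiting that the projected sets $C_1',C_2'$ are singletons, and Proposition~\ref{prop:bijection} gives an explicit formula $\varepsilon(c)$ assigning to each support point $c$ on the projected zig-zag path the value of $\varepsilon$ for which $(a,b,c)$ supports the cycle; as $\varepsilon(c)\downarrow 0$ the lifted support $w_3=(c,z(c))$ is dragged along the zig-zag path and the height of the cycle oscillates between $-1$ and $+1$. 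The oscillation is thus in the $z$-coordinate and is produced by deliberately destroying smoothness, not symmetry. If you wish to pursue your rotational variant you would first need to explain why it escapes the known positive results, and then actually solve the fixed-point equation in cylindrical coordinates; as it stands the argument has a gap exactly where the theorem lives.
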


Note that by compactness the least-squares problem \eqref{eq:ls} has a solution in this case. 
The counterexample is even more striking since the $\varepsilon$-cycle  $\mathbf{u}^\varepsilon$ is unique and therefore it is independent 
of the initial point: for each $u_0\in\HH$ the under-relaxed iteration \eqref{alt-pi-epsilon} converges for $k\to\infty$ 
towards this unique $\varepsilon$-cycle $\mathbf{u}^\varepsilon$, and de Pierro's conjecture fails.
Similarly the limit \eqref{eq:limitB} may fail to exist: to see this fix $\lambda_k\equiv\varepsilon$ for a large 
number of iterations so that $\mathbf{v}_k$ comes close to $\mathbf{u}^\varepsilon$, after which we shift to a smaller
$\lambda_k\equiv\varepsilon'$ again for a sufficiently large number of iterations so that $\mathbf{v}_k$ comes close to $\mathbf{u}^{\varepsilon'}$.
Proceeding in this manner for a suitable chosen sequence of $\varepsilon$'s we can force the full sequence $\mathbf{v}_k$ to oscillate between different cluster points of $\mathbf{u}^\varepsilon$.

The rest of this paper is structured as follows. Section~\S\ref{sec:counterexample} presents the counterexample.  In \S\ref{sec:reduction}
we discuss a two-dimensional reduction and we establish the equivalence between three- and two-dimensional cycles. In \S\ref{sec:twod} we study the properties of the two-dimensional cycles. Finally, the proof of Theorem \ref{thm:counterex} is presented in \S\ref{sec:proof}.

\section{The counterexample}\label{sec:counterexample}

Our counterexample is a variant of Example \ref{example1} consisting of the two line segments
$$C_1  := \co \{(-2,2,1), (-2,2,-1)\}, \qquad C_2  := \co \{(2,2,1), (2,2,-1)\},$$
as before, and the compact convex subset of the unit cylinder (see Fig. \ref{fig:sets3d})
$$C_3 := \overline{\co} \{p_k\, |\, k\in \N\},\qquad p_k = (\cos t_{k}, \sin t_{k}, (-1)^k)$$
where $\{t_k\}$ is a monotonically increasing sequence with $t_1 =\frac{\pi}{4}$ and $t_k\to \frac{\pi}{2}$ 
as $k\to \infty$.
\begin{figure}[ht]
\includegraphics[width=0.75\textwidth]{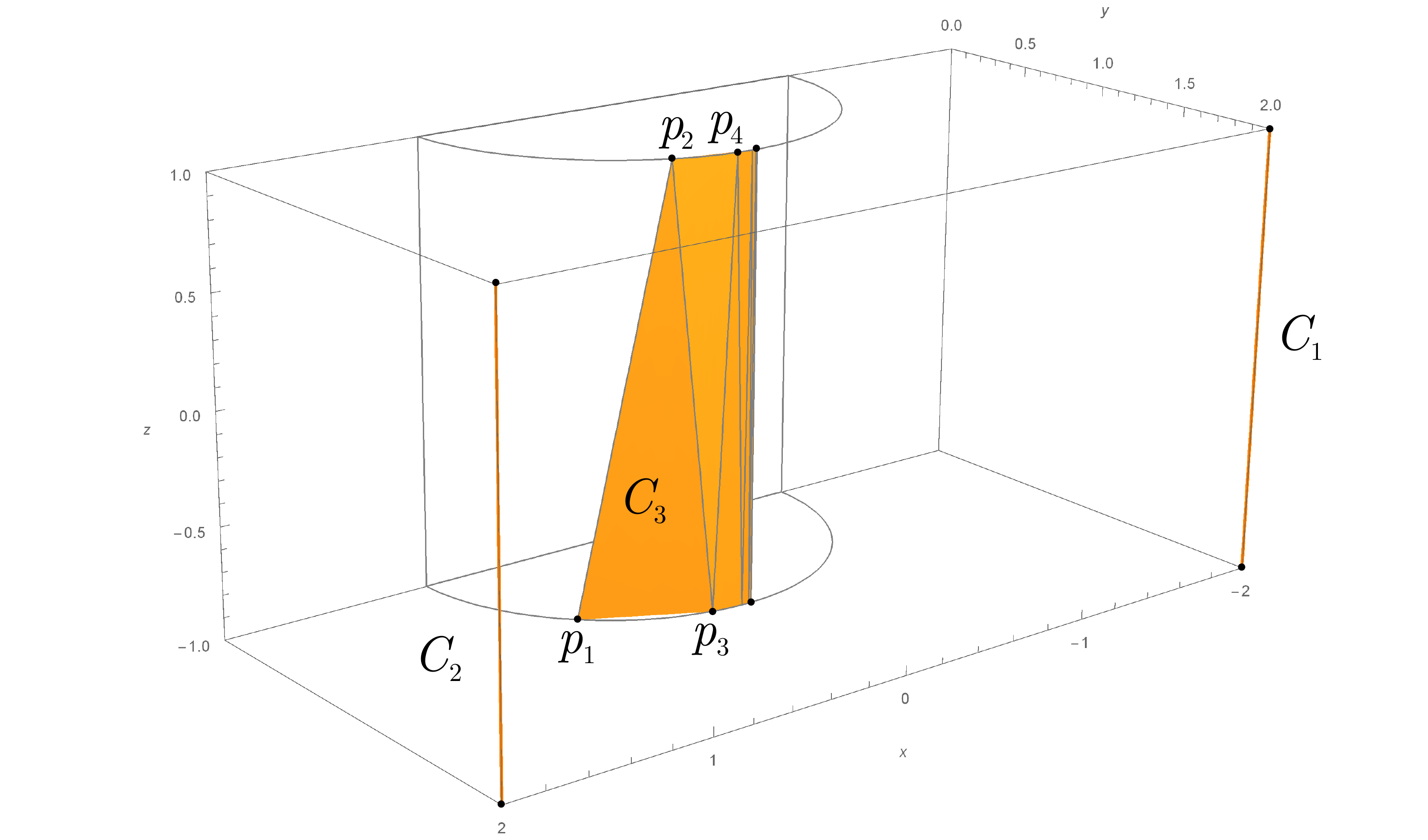}
\caption{The sets $C_1$, $C_2$ and $C_3$ for the sequence $t_k =
\frac{\pi}{2}(1-\frac{1}{2k})$.
}
\label{fig:sets3d}
\end{figure}

The least square solution set is again the segment $S=\{(0,\frac{5}{3},z):|z|\leq 1\}$. Indeed, observe that the 3-tuples $\{((-2,2,z),(2,2,z),(0,1,z))\,|\, |z|<1\} $ realise the relevant distances in \eqref{eq:ls} for the optimal least-squares solutions for Example~\ref{example1}. Since our new set $C_3$ is a subset of the original cylinder in Example~\ref{example1} and also contains the set $\{(0,1,z)\,|\, |z|\leq 1\}$, the least squares solutions are the same for these two problems.

Also, one can easily check that all the $p_k$'s are extreme points of $C_3$.
{\color{red} 
%
%
%
%
%
%
}
As will be seen in the sequel, the main feature of $C_3$ is the infinite sequence of facets $\co\!\{p_k,p_{k+1},p_{k+2}\}$ with alternating slopes which forces
the limit cycles $\mathbf{u}^\varepsilon$ to oscillate as $\varepsilon\to 0$ between the planes $z=-1$ and $z=1$,
``shadowing'' the zig-zag path $P$ that connects the points $p_1,p_2,p_3\ldots$

In the subsequent analysis we will consider the vertical projections onto the $xy$-plane.
Namely, for each $u=(x,y,z)\in\R^3$ we denote $u'=(x,y)$
and we consider the projected sets $C_1', C'_2, C'_3$. Letting $a= (-2,2)$, 
$b = (2,2)$ and $v_k=(\cos t_{k}, \sin t_{k})$, these projections are (see Fig.~\ref{fig:sets2d})
\begin{align}\label{eq:defCproj}
C'_1 & = \{(x,y)\,|\, (x,y,z) \in C_1 \} =  \{a\},\notag\\
C'_2 & = \{(x,y)\,|\, (x,y,z) \in C_2 \} =  \{b\},\\
C'_3 & = \{(x,y)\,|\, (x,y,z) \in C_3 \}  = \overline{\co}\left\{ v_k\, |\, k\in \N\right\}.\notag
\end{align}
\vspace{-0.5cm}

\begin{figure}[ht]
\includegraphics[width=0.7\textwidth]{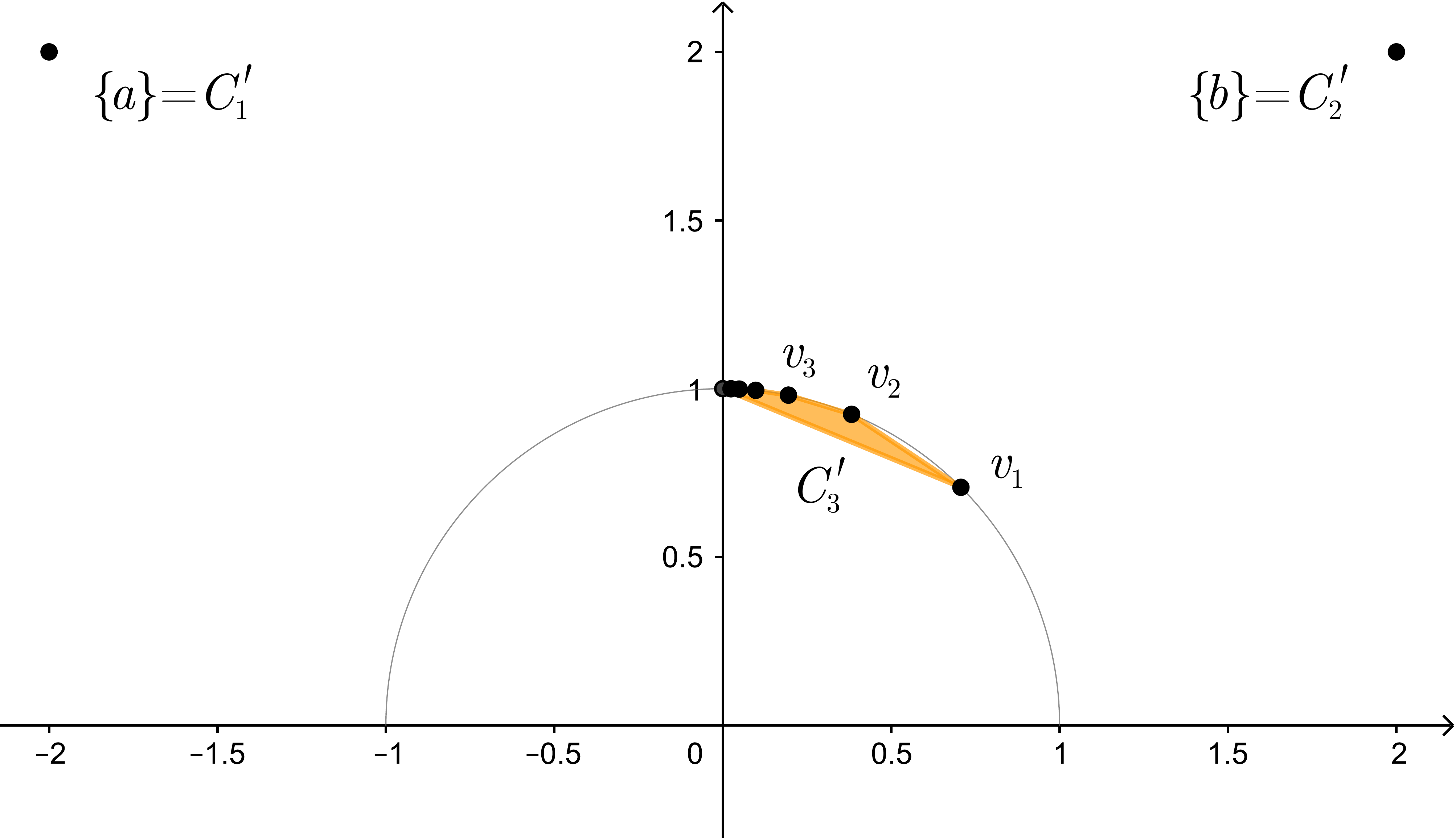}
\caption{The projections $C_1', C_2', C_3'$ of the sets $C_1, C_2, C_3$ onto the $xy$-plane.}
\label{fig:sets2d}
\end{figure}

Note that the zig-zag path $P$  projects vertically onto the path $P'$ within the $xy$-plane going through the points $v_1, v_2,v_3,\ldots$ 
which lies on the boundary of $C_3'$. Conversely for each point $p'\in P'$ the vertical line through $p'$ meets the set $C_3$
at a unique point $p$ which is in fact in $P$. We denote $z(p')$ the corresponding height so that $p=(p',z(p'))\in P$.

\section{Reduction to two dimensions}\label{sec:reduction}

Our first goal is to show that the vertical projection onto the $xy$-plane establishes a one-to-one correspondence between 
the three-dimensional  $\varepsilon$-cycles for $C_1,C_2,C_3$ and the two-dimensional $\varepsilon$-cycles for the projected sets
$C'_1,C'_2,C'_3$.

In the sequel we define the \emph{support} of an $\varepsilon$-cycle $(u_1,u_2, u_3)$ as the triple $(w_1,w_2,w_3)$
formed by the projections of $u_3$, $u_1$ and $u_2$ onto $C_1$, $C_2$ and $C_3$ respectively, that is
\begin{equation}\label{eq:proj}
w_1 := \Pi_{C_1}(u_3),\quad 
w_2 := \Pi_{C_2}(u_1),\quad 
w_3 := \Pi_{C_3}(u_2).
\end{equation}
Then, the system \eqref{eq:definecycle} can be written as
\begin{equation}\label{eq:proj2}
\begin{array}{ccl}
u_1 &=& (1\!-\!\varepsilon)u_{3} + \varepsilon w_1,\\  
u_2 &=& (1\!-\!\varepsilon)u_{1} + \varepsilon w_2, \\
u_3 &=& (1\!-\!\varepsilon)u_{2} + \varepsilon w_3,
\end{array} 
\end{equation}
from which it follows that the $u_i$'s can be recovered as convex combinations of the $w_i$'s 
\begin{equation}\label{eq:proj3}
\begin{array}{ccl}
u_1 &=& \frac{(1-\varepsilon)^2 w_2 +(1-\varepsilon)w_3+w_1}{\varepsilon^2-3\varepsilon+3}, \\  
u_2 &=& \frac{(1-\varepsilon)^2 w_3 + (1-\varepsilon)w_1+w_2}{\varepsilon^2-3\varepsilon+3}, \\
u_3 &=& \frac{(1-\varepsilon)^2 w_1 + (1-\varepsilon)w_2+w_3}{\varepsilon^2-3\varepsilon+3}.
\end{array} 
\end{equation}

\begin{proposition}\label{prop:DimRed} Let $C_1, C_2, C_3$ defined as in {\em \S\ref{sec:counterexample}}
 and $C'_1, C_2', C_3'$  their $xy$-projections. Then the triple $(u_1,  u_2,u_3)$ is an $\varepsilon$-cycle for $C_1, C_2, C_3$ with support  $(w_1,w_2,w_3)$ if and only if the following two properties hold:
\begin{itemize}
\item[(i)] the points $u_1$, $u_2$, $u_3$, $w_1$, $w_2$ and $w_3$ lie in a plane orthogonal to the $z$-axis; 
\item[(ii)] the projections  $(u_1',u_2',u_3')$  on the $xy$-plane are an $\varepsilon$-cycle for the two-dimensional  
sets $C_1', C_2', C_3'$ with support $(w_1',w_2',w_3')$.
\end{itemize}
\end{proposition}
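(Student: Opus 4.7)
My plan is to reduce everything to the behaviour of the $z$-coordinates, exploiting that $C_1$ and $C_2$ are vertical segments over $a$ and $b$ and that $C_3\subset\R^2\times[-1,1]$. Denote the $z$-coordinates of $u_i,w_i$ by $z_i,\zeta_i$, and let $c(z)=\max(-1,\min(1,z))$. Because $C_1$ and $C_2$ are cylinders over singletons, their projections factor as $\Pi_{C_1}(x,y,z)=(a,c(z))$ and $\Pi_{C_2}(x,y,z)=(b,c(z))$, while for $C_3$ only $\zeta_3\in[-1,1]$ is automatic.

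For the forward direction, I would project \eqref{eq:proj2} onto the $z$-axis to obtain
\[
z_1=(1-\varepsilon)z_3+\varepsilon c(z_3),\quad z_2=(1-\varepsilon)z_1+\varepsilon c(z_1),\quad z_3=(1-\varepsilon)z_2+\varepsilon\zeta_3.
\]
If $z_3>1$, then $z_1-1=(1-\varepsilon)(z_3-1)\geq 0$ and $z_3-z_1=\varepsilon(z_3-1)>0$, giving $1\leq z_1<z_3$; iterating, $1\leq z_2\leq z_1$, and then $\zeta_3\leq 1$ forces $z_3\leq(1-\varepsilon)z_2+\varepsilon\leq z_2$, contradicting $z_2\leq z_1<z_3$. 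The symmetric argument rules out $z_3<-1$, so $z_3\in[-1,1]$, which collapses the recursion to $z_1=z_2=\zeta_1=\zeta_2=\zeta_3=z_3$ and establishes (i). Projecting \eqref{eq:proj2} onto the $xy$-plane then yields the 2D cycle relations with support $(a,b,w_3')$; the singleton identifications $w_1'=a,\,w_2'=b$ are immediate, and for $w_3'=\Pi_{C_3'}(u_2')$ I apply the characterization \eqref{eq:projchar} to $w_3=\Pi_{C_3}(u_2)$: since $u_2$ and $w_3$ now share the same height, $\langle v-w_3,u_2-w_3\rangle$ collapses to $\langle v'-w_3',u_2'-w_3'\rangle$ for every $v=(v',v_z)\in C_3$, yielding the 2D inequality for all $v'\in C_3'$.

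For the reverse direction, write $u_i=(u_i',z_0)$ and $w_i=(w_i',z_0)$ via (i). The $xy$-components of \eqref{eq:proj2} are exactly the 2D cycle equations from (ii), while the $z$-components reduce to the trivial identity $z_0=(1-\varepsilon)z_0+\varepsilon z_0$. Since $z_0\in[-1,1]$ (inherited from $w_1\in C_1$), one has $\Pi_{C_1}(u_3)=(a,z_0)=w_1$ and analogously $w_2$. For $w_3$, the same collapsing gives
\[
\langle v-w_3,u_2-w_3\rangle=\langle v'-w_3',u_2'-w_3'\rangle\leq 0\qquad\forall v\in C_3,
\]
by the 2D projection property (noting $v'\in C_3'$), and $w_3\in C_3$ is part of the support hypothesis; hence $w_3=\Pi_{C_3}(u_2)$.

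The main obstacle is item (i): forcing all six $z$-coordinates to coincide uses crucially that every $C_i$ lies in the slab $|z|\leq 1$ and that the clipping map $c$ acts as a strict contraction toward this slab, which is where the specific geometry of the construction enters. Once (i) is in place, everything else is a clean unpacking of \eqref{eq:projchar}, which decouples into horizontal and vertical pieces precisely when the source point and its projection share a common height.
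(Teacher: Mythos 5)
Your proof is correct and follows essentially the same route as the paper's: first force all six points onto a common horizontal plane by tracking $z$-coordinates through the cycle equations, then observe that the characterization \eqref{eq:projchar} decouples into horizontal and vertical components once a source point and its projection share a height. You are in fact slightly more careful than the paper in two places --- you rule out $|z_3|>1$ explicitly (the paper gets this for free since \eqref{eq:proj3} exhibits each $u_i$ as a convex combination of the $w_j\in C_j$, all of which lie in the slab $|z|\le 1$), and you replace the paper's proof by contradiction for $w_3'=\Pi_{C_3'}(u_2')$ with a direct verification.
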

\begin{proof} Fix some $\varepsilon>0$. We first prove the necessity. Let $(u_1,  u_2,u_3)$ be an $\varepsilon$-cycle for $C_1, C_2, C_3$. 
It is not difficult to see that any such cycle must lie in a plane orthogonal to the $z$-axis. Indeed, for any point $u=(x,y,z)$ with $|z|\leq 1$ we have 
$$
\Pi_{C_1}(u)=(-2,2,z)\quad\mbox{and}\quad \Pi_{C_2}(u)=(2,2,z). 
$$
It follows that $(w_1)_z=(u_3)_z$  and then the first equality in \eqref{eq:proj2} yields $(u_1)_z=(u_3)_z$.
Similarly $(w_2)_z=(u_1)_z$ so that the second equality in \eqref{eq:proj2} yields $(u_2)_z=(u_1)_z$.
Hence $(u_1)_z=(u_2)_z=(u_3)_z$ and the third equality in \eqref{eq:proj2} implies $(w_3)_z=(u_2)_z$.
Altogether
$$ (u_1)_z  = (u_2)_z=(u_3)_z=(w_1)_z = (w_2)_z= (w_3)_z$$
which proves (i).

In order to prove (ii) let us first note that for the projections on the $xy$-plane we have
$$
u_1' = u'_{3} + \varepsilon (w_1'-u_{3}'), \quad  
u_2' = u'_{1} + \varepsilon (w_2'-u_{1}'),  \quad
u_3' = u'_{2} + \varepsilon (w_3'-u_{2}'),  
$$
Moreover since the sets $C'_1$ and $C'_2$ are singletons, we clearly have 
$$
w_1' = a = \Pi_{C_1'}(u_3'), \quad w_2' = b = \Pi_{C_2'}(u_1'),
$$
so that it remains to show that $w_3' = \Pi_{C'_3}(u_2')$. Let $p'=\Pi_{C'_3}(u_2')$ and suppose by contradiction that $p'\neq w_3'$ so that 
$\|p'-u'_2\| <\|w_3'-u_2'\|$.
Then we have  
$$\|p'-u'_2\|^2  =  \|p'-w'_3\|^2 +\|w'_3-u'_2\|^2 + 2 \langle p'-w_3',w_3'-u'_2\rangle <\|w_3'-u'_2\|^2, $$
and therefore 
\begin{equation}\label{eq:pfprojineq}
2 \langle p'-w_3',u_2'-w_3'\rangle>\|p'-w'_3\|^2>0.
\end{equation}
Since $C'_3$ is the projection of $C_3$ on the $xy$-plane, there exists some point $p\in C_3$ such that $p=(p',z)\in C_3$ for $z \in [-1,1]$. 
Moreover, since $(u_2)_z=(w_3)_z$ it follows that
$$
\langle p- w_3, u_2-w_3\rangle  = \langle p' - w'_3, u'_2-w'_3\rangle >0
$$
which violates the property \eqref{eq:projchar} of projections and the fact that $w_3=\Pi_{C_3}(u_2)$. This contradiction 
implies that $p'=w_3'$ and establishes (ii).

Let us next prove the converse. Let $(u_1',u_2', u_3')$ be a two-dimensional cycle with support $(w_1',w_2', w_3')$.
The point $u_3'$ lies on the boundary of $C_3'$ and there is a unique height $z=z(u_3')\in [-1,1]$ for which the 
lifted point $u_3=(u_3',z)$ belongs to the zig-zag path $P\subseteq C_3$. We claim that the points $u_i=(u_i',z)$  for $i\in \{1,2,3\}$
constitute an $\varepsilon$-cycle for the sets $C_1,C_2,C_3$ with support $w_i=(w_i',z)$.
Indeed, we have
$$
u_1 = u_{3} + \varepsilon (w_1-u_{3}), \quad  
u_2 = u_{1} + \varepsilon (w_2-u_{1}),  \quad
u_3 = u_{2} + \varepsilon (w_3-u_{2}). 
$$
Clearly $w_1 = \Pi_{C_1}(u_3)$ and $w_2 = \Pi_{C_2}(u_1)$ so it suffices to show that $w_3 = \Pi_{C_3}(u_2)$. 
To this end we note that for all $p\in C_3$ and  its projection $p'$ on the $xy$-plane we have
\begin{equation}\label{eq:tech01}
\langle p-w_3,u_2-w_3\rangle  =  \langle p'-w'_3,u'_2-w'_3\rangle.
\end{equation}
Since $w'_3=\Pi_{C_3'}(u'_2)=\Pi_{C_3'}(u'_3) $ the expression in \eqref{eq:tech01} is non-positive and hence invoking \eqref{eq:projchar} we conclude that $w_3$ is the projection of $u_2$ onto $C_3$
as was to be proved. 
\end{proof}

Using the previous result, we may deduce the existence and uniqueness of an $\varepsilon$-cycle.
\begin{proposition}\label{prop:uniquecycle} For each $\varepsilon\in (0,1)$ there exists a unique $\varepsilon$-cycle
$(u_1,u_2,u_3)$ for $C_1,C_2,C_3$, and a unique $\varepsilon$-cycle $(u_1',u_2',u_3')$ for $C'_1,C'_2,C'_3$.
\end{proposition}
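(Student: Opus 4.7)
The plan is to reduce to the 2D projected problem via Proposition~\ref{prop:DimRed} and apply Banach's fixed point theorem, exploiting that two of the three projected sets, $C_1' = \{a\}$ and $C_2' = \{b\}$, are singletons.

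For the 2D cycle, observe that $(u_1', u_2', u_3')$ is an $\varepsilon$-cycle for $C_1', C_2', C_3'$ if and only if $u_3'$ is a fixed point of $T_\varepsilon := T_3^\varepsilon \circ T_2^\varepsilon \circ T_1^\varepsilon$, where $T_i^\varepsilon(u) := (1-\varepsilon) u + \varepsilon \Pi_{C_i'}(u)$, with $u_1' = T_1^\varepsilon(u_3')$ and $u_2' = T_2^\varepsilon(u_1')$. Since $C_1'$ and $C_2'$ are singletons, $T_1^\varepsilon(u) = (1-\varepsilon)u + \varepsilon a$ and $T_2^\varepsilon(u) = (1-\varepsilon)u + \varepsilon b$ are affine contractions with Lipschitz constant $1-\varepsilon$, while $T_3^\varepsilon$ is nonexpansive as an averaged projection onto a convex set. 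Hence $T_\varepsilon$ is a contraction with constant $(1-\varepsilon)^2 < 1$ for $\varepsilon \in (0,1)$, and Banach's fixed point theorem furnishes a unique fixed point, whence a unique 2D cycle.

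For the 3D cycle, the sufficiency half of Proposition~\ref{prop:DimRed} lifts the unique 2D cycle to a 3D cycle at the canonical height $z(w_3')$ along the zig-zag path $P$, yielding existence. For uniqueness, by the necessity half of Proposition~\ref{prop:DimRed}, any 3D cycle lies in a horizontal plane at some common height $z^*$ and projects to the unique 2D cycle, so only $z^*$ remains to be pinned down. As in the proof of Proposition~\ref{prop:DimRed}, the condition $w_3 = \Pi_{C_3}(u_2)$ with $u_2 = (u_2', z^*)$ reduces to the membership $(w_3', z^*) \in C_3$, because the horizontal configuration forces the relevant inner products to match their 2D counterparts. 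Now $w_3'$ lies on some edge $[v_k, v_{k+1}]$ of the zig-zag arc $P' \subset \partial C_3'$, and since the extreme points $\{v_j\}$ of $C_3'$ lie in convex position on the unit circle, the representation $w_3' = (1-\lambda) v_k + \lambda v_{k+1}$ is unique, forcing $z^* = (1-\lambda)(-1)^k + \lambda(-1)^{k+1}$.

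The main obstacle I anticipate is this final step: verifying both that $w_3'$ lies on $P'$ (and not on the chord component of $\partial C_3'$ joining $v_1$ to the accumulation point $(0,1)$), which should follow from the far positive-$y$ location of $u_2'$ and a comparison of outward normal directions, and then invoking the unique convex representation of points on $P'$ in terms of the extreme points $v_j$ to pin down the height.
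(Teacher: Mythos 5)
Your proposal is correct, and its two\--dimensional half takes a genuinely different, more self\--contained route than the paper. The paper gets existence of the 2D cycle from compactness plus general results cited from de Pierro, and proves uniqueness separately through the explicit support formula: since $w_1'=a$ and $w_2'=b$ are forced, two cycles must satisfy $(\varepsilon^2-3\varepsilon+3)(u_3'-u_3'')=w_3'-w_3''$, and nonexpansiveness of $\Pi_{C_3'}$ together with $\varepsilon^2-3\varepsilon+3>1$ forces them to coincide. Your Banach fixed\--point argument on $T_3^\varepsilon\circ T_2^\varepsilon\circ T_1^\varepsilon$ exploits exactly the same structural fact---the two singleton sets supply the contraction factor, here $(1-\varepsilon)^2$ versus the paper's sharper $1/(\varepsilon^2-3\varepsilon+3)$---but it delivers existence and uniqueness in one stroke without appealing to external convergence results, which is a small gain. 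The three\--dimensional half is the same in both arguments: the necessity part of Proposition~\ref{prop:DimRed} forces horizontality and projects any 3D cycle onto the (unique) 2D cycle, and the height is pinned down by the requirement $(w_3',z^*)\in C_3$; your justification via the unique convex representation of points of $P'$ over the extreme points $v_j$ in convex position makes explicit a fact the paper only asserts in \S\ref{sec:counterexample}. The obstacle you flag---ruling out that $w_3'$ lands on the closing chord $[v_1,v_\infty]$ of $\partial C_3'$---is real but resolves exactly as you suggest: from \eqref{eq:compcycles} the point $u_2'$ has positive $x$-coordinate and $y$-coordinate exceeding $3/2$, so it lies outside the normal\--cone region of that chord and of $v_\infty$; the paper leaves this point implicit as well, so it is not a gap relative to the paper's own standard of rigor.
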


\begin{proof} In view of the one-to-one correspondence between the three- and two-dimensional $\varepsilon$-cycles established in Proposition \ref{prop:DimRed},
it suffices to consider the two-dimensional case. The existence of an $\varepsilon$-cycle $(u_1',u_2',u_3')$ follows from  the compactness of the sets 
$C'_1,C'_2,C'_3$ and general results in \cite{DePierro}.  In order to prove its uniqueness let us consider
 two $\varepsilon$-cycles $(u_1',u_2',u_3')$ and $(u_1'',u_2'',u_3'')$. Proceeding as in \eqref{eq:proj3} we get the following equalities in terms of their
 corresponding supports 
$$u_3'  = \frac{(1-\varepsilon)^2 w'_1 + (1-\varepsilon)w'_2+w'_3}{\varepsilon^2-3\varepsilon + 3}
\quad \text{and}\quad 
u''_3  = \frac{(1-\varepsilon)^2 w''_1 + (1-\varepsilon)w''_2+w''_3}{\varepsilon^2-3\varepsilon + 3},
$$
and since $w_1'=w_1''=a$ and  $w_2'=w''_2=b$ we obtain  
$$
(\varepsilon^2-3\varepsilon + 3)(u'_3-u''_3) = (w'_3-w''_3).
$$
Now, $w'_3$ and $w_3''$ are the projections of $u'_3$ and $u_3''$ onto $C_3$ so that
$\|w'_3-w''_3\|\leq \|u'_3-u''_3\|$, and since $\varepsilon^2-3\varepsilon + 3>1$ on $(0,1)$ 
it follows that $u_3'=u_3''$ and $w_3'=w_3''$. From this equality and property \eqref{eq:proj3} for the two-dimensional 
 cycles we readily get $u_1'=u_1''$ and $u_2'=u_2''$.
 \end{proof}

\section{Two-dimensional cycles}\label{sec:twod}

Let $(u_1',u_2',u_3')$ be an $\varepsilon$-cycle for the sets $C'_1, C'_2, C'_3$ with
support $(w_1',w_2',w_3')$. Proceeding as in \eqref{eq:proj3} we get
\begin{equation}
\label{eq:compcycles}
\begin{array}{ccl}
u'_1 & = &\frac{(1-\varepsilon)^2 w'_2 + (1-\varepsilon)w'_3+w'_1}{\varepsilon^2-3\varepsilon + 3},\\
u'_2 & = &\frac{(1-\varepsilon)^2 w'_3 + (1-\varepsilon)w'_1+w'_2}{\varepsilon^2-3\varepsilon + 3},\\
u'_3 & = &\frac{(1-\varepsilon)^2 w'_1 + (1-\varepsilon)w'_2+w'_3}{\varepsilon^2-3\varepsilon + 3}.
\end{array}
\end{equation}
In particular, since $w_1'=a$ and $w_2'=b$, the last equality implies
$$
\mbox{$\frac{\varepsilon^2-3\varepsilon + 3}{(1-\varepsilon)}$}(u'_3-w'_3) = (1-\varepsilon)(a-w'_3)+(b-w'_3).
$$
Since $w_3'=\Pi_{C_3}(u'_3)$ it follows that $u'_3-w_3'$ is a normal vector to $C'_3$ at $w_3'$. Choose any nonzero vector $d$ within the $xy$-plane orthogonal to $u'_3-w'_3$, so that 
\begin{equation}\label{eq:epsilonprelim}
0 = (1-\varepsilon) \langle a-w'_3,d\rangle +\langle b-w'_3,d\rangle.
\end{equation}
We note that $\langle a-w'_3,d\rangle \neq 0$ since otherwise $a-w'_3$ and $b-w'_3$ would be colinear, which is clearly impossible, and therefore 
from \eqref{eq:epsilonprelim} we get
$$
\varepsilon = 1+\frac{\langle b-w'_3,d\rangle}{\langle a -w'_3,d\rangle}.
$$
We use the intuition gained in the preceding discussion to prove the following key result. 

\begin{proposition}\label{prop:bijection} Let $k\in \N$ and set $d_k=\frac{v_{k+1}-v_k}{\|v_{k+1}-v_k\|}$. Then, for any point $c\in [v_k,v_{k+1})$ the triple $(w'_1,w'_2,w'_3)=(a,b,c)$
is the support of the $\varepsilon$-cycle $(u_1',u_2',u_3')$
corresponding to
$$\mbox{$\varepsilon = \varepsilon(c)=1+\frac{\langle b-c,d_k\rangle}{\langle a-c,d_k\rangle}\in(0,1)$}.$$
\end{proposition}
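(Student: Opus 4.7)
The first two support conditions in \eqref{eq:proj} are automatic since $C_1' = \{a\}$ and $C_2' = \{b\}$ are singletons. So the plan reduces to checking the third, $c = \Pi_{C_3'}(u_2')$, for $u_2'$ obtained by substituting $(w_1', w_2', w_3') = (a, b, c)$ into \eqref{eq:compcycles}. A direct substitution yields
\[
u_2' - c = \frac{(1 - \varepsilon)(a - c) + (b - c)}{\varepsilon^2 - 3\varepsilon + 3},
\]
and by \eqref{eq:projchar} the projection condition is equivalent to $u_2' - c$ lying in the outward normal cone to $C_3'$ at $c$. Letting $n_k$ be the outward unit normal to the edge $[v_k, v_{k+1}]$ (perpendicular to $d_k$), for $c$ in the relative interior of this edge the normal cone is the ray $\R_+ n_k$, so the condition splits as $\langle u_2' - c, d_k\rangle = 0$ together with $\langle u_2' - c, n_k\rangle \geq 0$. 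The orthogonality condition rearranges immediately into the stated formula for $\varepsilon(c)$.

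The sign analysis forms the heart of the argument. I would parametrize $d_k = (-\sin\tau, \cos\tau)$ with $\tau = (t_k + t_{k+1})/2 \in (\pi/4, \pi/2)$ and write $c = \lambda v_k + (1-\lambda) v_{k+1}$ with $\lambda \in (0, 1]$. Sum-to-product identities then yield a closed form $\langle a - c, d_k\rangle = 2(\sin\tau + \cos\tau) + (2\lambda - 1)\sin((t_{k+1} - t_k)/2)$ and an analogous expression for $\langle b - c, d_k\rangle$ with $-2\sqrt{2}\sin(\tau - \pi/4)$ in place of $2(\sin\tau + \cos\tau)$. The bounds $\sin\tau + \cos\tau > 1$ and $|2\lambda - 1| \leq 1$ give $\langle a - c, d_k\rangle > 0$; the key quantitative inequality $\tau - \pi/4 \geq (t_{k+1} - t_k)/2$, which follows from $t_k \geq \pi/4$, gives $\langle b - c, d_k\rangle < 0$; and the sum $\langle a + b - 2c, d_k\rangle = 2(c_x \sin\tau + (2 - c_y)\cos\tau) > 0$ forces $|\langle b - c, d_k\rangle| < \langle a - c, d_k\rangle$. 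Together these place $\varepsilon(c) \in (0, 1)$.

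For the remaining inequality $\langle u_2' - c, n_k\rangle \geq 0$, I would substitute the formula for $\varepsilon$ into $(1 - \varepsilon)\langle a - c, n_k\rangle + \langle b - c, n_k\rangle$ and clear the positive factor $\langle a - c, d_k\rangle$ to produce the determinant-like expression $\langle a - c, d_k\rangle \langle b - c, n_k\rangle - \langle a - c, n_k\rangle \langle b - c, d_k\rangle$. Since $\{d_k, n_k\}$ is orthonormal with $\det(d_k, n_k) = -1$, this quantity equals $-\det(a - c, b - c)$, and a direct expansion using $a = (-2, 2)$ and $b = (2, 2)$ gives $\det(a - c, b - c) = -4(2 - c_y) < 0$, so the expression is positive and $\langle u_2' - c, n_k\rangle > 0$. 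The endpoint case $c = v_k$ is handled by the same computation because the outward normal cone at the vertex $v_k$ contains $\R_+ n_k$ as an extreme ray. The main obstacle is the sign bookkeeping in the middle paragraph: the strict inequality $\langle b - c, d_k\rangle < 0$ requires the quantitative bound $\tau - \pi/4 \geq (t_{k+1} - t_k)/2$ coming from $t_k \geq \pi/4$, not merely the qualitative statement $\tau > \pi/4$.
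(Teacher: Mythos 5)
Your proposal is correct and follows the same overall strategy as the paper: reduce to verifying $c=\Pi_{C_3'}(u_2')$, compute $u_2'-c=\frac{(1-\varepsilon)(a-c)+(b-c)}{\varepsilon^2-3\varepsilon+3}$, read off $\varepsilon(c)$ from the orthogonality condition $\langle u_2'-c,d_k\rangle=0$, and place $\varepsilon(c)$ in $(0,1)$ via the three sign facts $\langle a-c,d_k\rangle>0$, $\langle b-c,d_k\rangle<0$, $\langle a+b-2c,d_k\rangle>0$. Where you differ is in how these facts are justified and in what you consider sufficient for the projection claim. The paper establishes the three signs by reading them off the geometric picture (Figure~\ref{fig:epsilon}), whereas you derive them analytically by writing $d_k=(-\sin\tau,\cos\tau)$ with $\tau=\frac{t_k+t_{k+1}}{2}$ and using sum-to-product identities; your computations check out (in particular the bound $\tau-\frac{\pi}{4}\geq\frac{t_{k+1}-t_k}{2}$ from $t_k\geq\frac{\pi}{4}$ is exactly the point where the hypothesis $t_1=\frac{\pi}{4}$ enters, and the extra factor $2\sqrt{2}>1$ covers the equality case $k=1$). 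More substantively, the paper asserts that orthogonality of $u_2'-c$ to the segment $[v_k,v_{k+1})$ suffices for $c=\Pi_{C_3'}(u_2')$, which implicitly requires that $u_2'-c$ point into the \emph{outward} normal cone rather than merely lie in the normal line; you make this explicit and verify it via the determinant identity $\langle a-c,d_k\rangle\langle b-c,n_k\rangle-\langle a-c,n_k\rangle\langle b-c,d_k\rangle=-\det(a-c,b-c)=4(2-c_y)>0$, and you also treat the vertex case $c=v_k$ where the normal cone strictly contains $\R_+ n_k$. So your argument is not just a valid alternative but a slightly more complete one: it buys full rigor at the cost of some trigonometric bookkeeping, while the paper's version is shorter and more readable but leans on the figure for both the sign analysis and the outward-pointing condition.
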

\begin{proof} Let us first show that  $\varepsilon(c)$ is well defined.
From Figure~\ref{fig:epsilon} 
\begin{figure}[ht]
\includegraphics[width=0.75\textwidth]{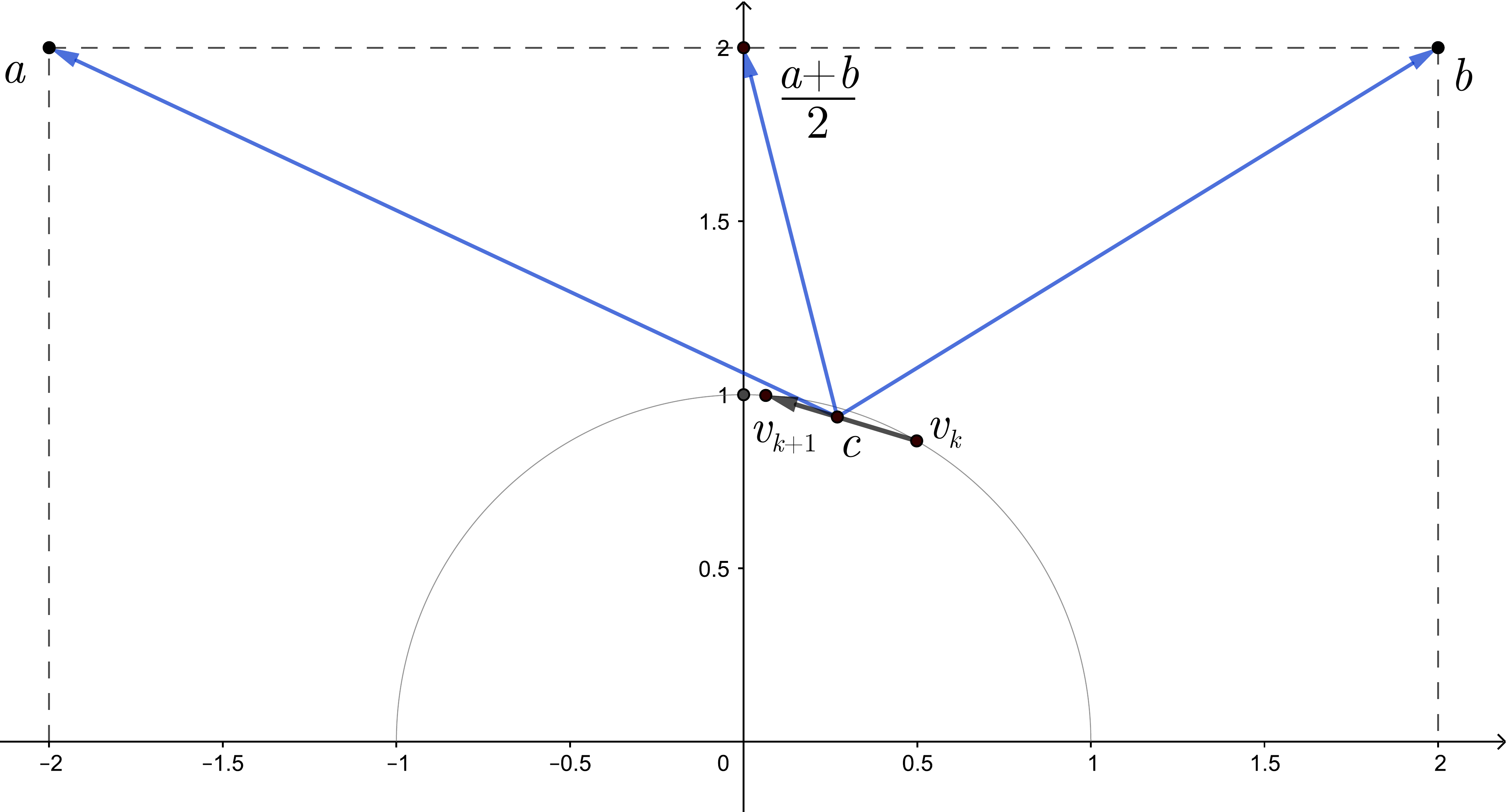}
\caption{Geometric argument for the proof of Proposition~\ref{prop:bijection}}
\label{fig:epsilon}
\end{figure}
we see that $a-c$ makes an acute angle with $d_k$ so 
it has a positive projection $\alpha(c)=\langle a-c,d_k\rangle>0$. Similarly $b-c$ has a negative projection 
$\beta(c)=\langle b-c, d_k\rangle<0$,  whereas $\alpha(c)+\beta(c)=2\langle\frac{a+b}{2}-c, d_k\rangle>0$.
Combining these facts it follows that $\varepsilon(c)=1+\frac{\beta(c)}{\alpha(c)}$ is well defined 
and belongs to $(0,1)$. 

Let us now consider $(u_1',u_2',u_3')$ computed from  \eqref{eq:compcycles} using $(w'_1,w'_2,w'_3)$ and $\varepsilon=\varepsilon(c)$.
In order to show that this is the (unique) $\varepsilon$-cycle with 
support $(w'_1,w'_2,w'_3)$ we note that  $w'_1 = a = \Pi_{C'_1}(u_3')$ and $w'_2 = b = \Pi_{C'_2}(u_1')$, so that 
it remains to prove $w'_3 = c= \Pi_{C'_3}(u_2')$. 
For the latter it suffices to check that $u_2'-c$ is orthogonal
to the segment $[v_k,v_{k+1})$ at the point $c$, which amounts to $\langle u_2'-c,d_k\rangle = 0$.
Now, the second equation in \eqref{eq:compcycles} yields
$$\mbox{$u'_2-c = \frac{ (1-\varepsilon)(a-c)+(b-c)}{\varepsilon^2-3\varepsilon + 3},$}$$
so that the result boils down to show  
$\langle (1-\varepsilon)(a-c)+(b-c), d_k\rangle = 0$.
This is equivalent to  $(1-\varepsilon) \alpha(c)+\beta(c)=0$ which follows directly from 
$\varepsilon = 1+\frac{\beta(c)}{\alpha(c)}$.
\end{proof}

\section{Proof of Theorem~\ref{thm:counterex}}\label{sec:proof}

We are ready to prove Theorem \ref{thm:counterex} using the three sets $C_1,C_2,C_3$ defined in \S\ref{sec:counterexample}.

From Proposition \ref{prop:uniquecycle} we know that for each $\varepsilon\in (0,1)$ there is a unique $\varepsilon$-cycle $\mathbf{u}^\varepsilon$.
Hence, for any starting point $u_0$ the under-relaxed iterates $\mathbf{u}_k^\varepsilon$ converge to this $\mathbf{u}^\varepsilon$.  
Theorem \ref{thm:counterex} will be proved if we show that $\mathbf{u}^\varepsilon$ oscillates as $\varepsilon\to 0$. 

Using Proposition \ref{prop:bijection} we see that for each point $c\in P'$ in the projection of the zig-zag path $P$
the triple $(w'_1,w'_2,w'_3)=(a,b,c)$ supports a 2-dimensional $\varepsilon$-cycle $(u_1',u_2',u_3')$ for $C_1',C_2',C_3'$ where 
$\varepsilon=\varepsilon(c)$. 
According to Proposition \ref{prop:DimRed} we may lift this $\varepsilon$-cycle to the height $z(c)$ to get an $\varepsilon$-cycle 
$(u_1,u_2,u_3)$ for $C_1,C_2,C_3$ with support $(w_1,w_2,w_3)$ where $u_i=(u_i',z(c))$ and $w_i=(w_i',z(c))$ for $i\in\{1,2,3\}$. 

Now, as  $c$ moves along the path $P'$ towards $v_\infty$ the lifted point $w_3=(c,z(c))$ moves accordingly along the zig-zag path $P$ with the 
height $z(c)$ oscillating between -1 and +1. It follows that the $\varepsilon(c)$-cycle $(u_1,u_2,u_3)$ also oscillates between height -1 and +1.

To complete the proof it remains to show that $\varepsilon(c)$ decreases to $0$ as $c$ moves along  $P'$ towards $v_\infty$.
Indeed, from Figure \ref{fig:epsilon} we see that when $c$ moves from $v_k$ to $v_{k+1}$ the projection $\alpha(c)$ decreases whereas $-\beta(c)$ increases 
so that their quotient $\frac{-\beta(c)}{\alpha(c)}$ increases and therefore $\varepsilon(c)$ decreases. 
Similarly, $\varepsilon(c)$ also decreases (with a jump discontinuity at $c=v_k$) as we pass from one segment $c\in [v_k,v_{k+1})$ 
to the next $c\in [v_{k+1},v_{k+2})$. Finally, since clearly $ d_k\to (-1,0)$ it follows easily that $\varepsilon(c)\downarrow 0$ as $c$
tends to $v_\infty$.

\begin{remark}
Note that as $\varepsilon\downarrow 0$ the $\varepsilon$-cycles $\mathbf{u}^\varepsilon=(u_1^\varepsilon,u_2^\varepsilon,u_3^\varepsilon)$
shrink so that $u_1^\varepsilon\approx u_2^\varepsilon\approx u_3^\varepsilon$. With a little more work one can show that the accumulation points of the cycles $\mathbf{u}^\varepsilon$ are precisely 
the triples of the form $(\bar u,\bar u,\bar u)$ with $\bar u$ a least square solution. Hence, the $\omega$-limit set
of $\mathbf{u}^\varepsilon$ as $\varepsilon\downarrow 0$ is the full least square solution set $S=\{(0,\frac{5}{3},z):|z|\leq 1\}$. 
\end{remark}

\begin{remark} The jump discontinuities of $\varepsilon(c)$ at $c=v_k$ correspond to ranges of $\varepsilon$ for which
the corresponding $\varepsilon$-cycle  remain at height 1 (for $k$ even) or height -1 (for $k$ odd) with $w_3'\equiv v_k$.
Note that although the support triple $(w_1,w_2,w_3)$ remains constant in these ranges, the corresponding 
$\varepsilon$-cycle  $(u_1,u_2,u_3)$ changes according to \eqref{eq:proj3}.
\end{remark}

\bibliographystyle{plain}
\bibliography{references}

\end{document}